\documentclass[11pt]{amsart}
\usepackage[T1]{fontenc}
\usepackage[utf8]{inputenc}
\usepackage{amsmath,amssymb,amsthm}
\usepackage[hidelinks]{hyperref}
\usepackage[a4paper,margin=1.15in]{geometry}

\newtheorem{theorem}{Theorem}[section]
\newtheorem{lemma}[theorem]{Lemma}
\newtheorem{remark}[theorem]{Remark}
\newtheorem{problem}[theorem]{Problem}
\newtheorem*{theoremA}{Theorem A}
\newtheorem*{theoremB}{Theorem B}
\newtheorem*{theoremC}{Theorem C}

\newtheorem*{conjectureD}{Dombi's Conjecture}
\newtheorem*{problemD}{Dombi's Problem}

\theoremstyle{remark}
\newtheorem*{remarkintro}{Remark}
\theoremstyle{plain}

\newcommand{\BaseB}{\mathcal B}
\newcommand{\NewB}{\mathcal C}

\begin{document}

\title[Monotonicity and polynomial growth]{On the Monotonicity of Higher-Fold Representation Functions}
\author{Csaba S\'andor}
\email{sandor.csaba@ttk.bme.hu}
\address{Department of Stochastics, Institute of Mathematics, Budapest University of Technology and Economics, M\H{u}egyetem rkp. 3., H-1111 Budapest, Hungary; \newline \hspace*{4mm}
HUN-REN Alfr\'ed R\'enyi Institute of Mathematics, Re\'altanoda utca 13--15., H-1053 Budapest,  Hungary; \newline \hspace*{4mm}
MTA--HUN-REN RI Lend\"ulet ``Momentum'' Arithmetic Combinatorics Research Group, Re\'altanoda utca 13--15., H-1053 Budapest,  Hungary.}

\author{Quan-Hui Yang}
\email{yangquanhui01@163.com}
\address{School of Mathematical Sciences and Ministry of Education Key Laboratory for NSLSCS,
Nanjing Normal University, Nanjing 210023, P. R. China}
\thanks{The first author is supported by the NKFIH Grants No. K129335, KKP 144059 and the Lend\"ulet ``Momentum'' program of the Hungarian Academy of Sciences (MTA).
The second author is supported by the National Natural Science Foundation of China, Grant No.~12371005.}

\date{}
\begingroup
\renewcommand{\thefootnote}{}
\footnotetext{2020 Mathematics Subject Classification. 11B34.}
\addtocounter{footnote}{-1}
\endgroup
\keywords{additive number theory; additive representation function; monotonicity}
\begin{abstract}
For a positive integer $h$, let $R_{A,h}(n)$ denote the number of ordered representations
$n=s_1+\cdots+s_h$ with all $s_i\in A$. Let
\[
\BaseB=\{0\}\cup\{m\ge 1:\text{ the base-4 expansion of }m\text{ begins with }1\text{ or }2\}.
\]
Shallit proved that $R_{\BaseB,3}(n)$ is strictly increasing, thereby disproving a 2002 conjecture of Dombi. In this paper, by using linear bounds for
$R_{\BaseB,3}(n+1)-R_{\BaseB,3}(n)$ and a convolution argument, we prove the polynomial order of
$R_{\BaseB,h}(n+1)-R_{\BaseB,h}(n)$ for every integer $h\ge 3$. More precisely, for every integer $h\ge 3$, there exist constants $c_h,C_h>0$, depending only on $h$, such that
\[
 c_h n^{h-2}\le R_{\BaseB,h}(n+1)-R_{\BaseB,h}(n)\le C_h n^{h-2}
\]
for all integers $n\ge 1$. We also construct a co-infinite set $\NewB\subset\mathbb N$ satisfying
$\lim_{n\to\infty}\NewB(n)/n=1$ such that $R_{\NewB,h}(n)$ is strictly increasing for every integer $h\ge 3$. This answers a problem of Dombi posed in 2002. We also pose some problems for further research.
\end{abstract}

\maketitle
\section{Introduction}

Let $\mathbb N=\{0,1,2,\ldots\}$. For a set $A\subseteq \mathbb N$ and an integer $h\ge 1$, define the ordered $h$-fold representation function by
\[
R_{A,h}(n)=\#\{(a_1,\ldots,a_h)\in A^h:\ a_1+\cdots+a_h=n\}.
\]
Equivalently, if
\[
G_A(x)=\sum_{a\in A}x^a,
\]
then
\[
\sum_{n\ge 0}R_{A,h}(n)x^n=G_A(x)^h.
\]
We also write
\[
A(n)=\#\{a\in A:\ a\le n\}
\]
for the counting function of $A$. The problem considered here is when $R_{A,h}(n)$ can be eventually increasing, or strictly increasing, when the complement $\mathbb N\setminus A$ is infinite.

The following theorem, due to Erd\H{o}s, S\'{a}rk\"{o}zy and S\'{o}s (see \cite{ESS4,ESS5}), shows that for the ordered representation function no genuinely co-infinite example is possible.

\begin{theoremA}[Erd\H{o}s--S\'{a}rk\"{o}zy--S\'{o}s]
Let $A\subseteq \mathbb N$. The function $R_{A,2}(n)$ is eventually increasing if and only if $A$ is cofinite; that is, there exists an integer $n_0$ such that
\[
\{n\in\mathbb N:n\ge n_0\}\subseteq A.
\]
\end{theoremA}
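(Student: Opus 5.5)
The easy direction is a direct count. For the converse I would argue by contradiction on the unit circle with generating functions. Throughout, "increasing" is read as eventually nondecreasing and not eventually constant; this rules out finite $A$, for which $R_{A,2}$ is eventually $0$.

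\emph{Cofinite $\Rightarrow$ eventually increasing.} Let $F=\mathbb N\setminus A$ be finite and put $m=\max F$. For $n>2m$ no pair $(a,n-a)$ has both coordinates in $F$. Inclusion--exclusion over $0\le a\le n$ then gives
\[
R_{A,2}(n)=(n+1)-2|F| \qquad (n>2m),
\]
which is strictly increasing.

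\emph{Eventually increasing $\Rightarrow$ cofinite.} Suppose $R_{A,2}(n)\ge R_{A,2}(n-1)$ for all $n\ge n_0$, and that $B=\mathbb N\setminus A$ is infinite. Write $f=G_A$, $h=G_B$, so that $f(x)=\frac1{1-x}-h(x)$. Define
\[
D(x)=(1-x)f(x)^2=\sum_n d_nx^n,\qquad d_n=R_{A,2}(n)-R_{A,2}(n-1),
\]
so that $d_n\ge 0$ for $n\ge n_0$. Write $D=Q+D_+$, where $Q$ is the polynomial of degree $<n_0$ collecting the first terms and $D_+$ has nonnegative coefficients.

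\emph{Upper bound.} Positivity gives $|D_+(re^{i\theta})|\le D_+(r)$, hence $\int_0^{2\pi}|D(re^{i\theta})|\,\frac{d\theta}{2\pi}\le C+D(r)$. The value $D(r)$ is explicit:
\[
D(r)=\frac1{1-r}-2h(r)+(1-r)h(r)^2 .
\]

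\emph{Lower bound.} Let $g=(1-x)f=\sum_n \varepsilon_nx^n$ with $\varepsilon_n=1_A(n)-1_A(n-1)\in\{0,\pm1\}$. Since $|D|=|g|\,|f|$ and $|1-x|\le 2$, Parseval gives
\[
\int|D|\,\frac{d\theta}{2\pi}\ \ge\ \tfrac12\int|g|^2\,\frac{d\theta}{2\pi}=\tfrac12\sum_n\varepsilon_n^2r^{2n},
\]
which counts the boundary points between $A$ and $B$. I would also test other weights on the circle, e.g.\ $|D(re^{i\theta})|$ against $|1+re^{i\theta}|$ near $x=-1$, or the value $D(-r)=(1+r)f(-r)^2\ge0$. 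The aim is to isolate the contribution of $B$ through $h$ rather than through $1/(1-r)$.

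\emph{Main obstacle.} The leading terms $\frac1{1-r}$ on the two sides do not separate, so the contradiction must come from the lower-order terms involving $h(r)$ and the jump count. When $B$ is sparse, these are small compared with $\frac1{1-r}$. My first attempt is to extract the coefficient identity
\[
d_n=1-2\cdot 1_B(n)+\bigl(R_{B,2}(n)-R_{B,2}(n-1)\bigr).
\]
At each $n\in B$ with $n\ge n_0$, nonnegativity forces $R_{B,2}(n)-R_{B,2}(n-1)\ge1$. Since $B$ is infinite, this should propagate into $R_{B,2}$ growing too fast relative to $B(n)^2$, for instance by summing over $n\in B$ up to $N$ and comparing with $\sum_{n\le N}R_{B,2}(n)\le B(N)^2$. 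Making this bookkeeping close is the step I expect to be hardest. I would try it before any further analytic estimates.
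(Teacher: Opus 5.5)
Your cofinite direction is correct, but the converse is not proved, and neither route you sketch can close as it stands.

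\textbf{The Parseval comparison is too weak.} The lower bound $\tfrac12\sum_n\varepsilon_n^2r^{2n}$ never exceeds $\tfrac1{2(1-r^2)}$. The upper bound $C+D(r)=C+(1-r)f(r)^2$ is $\sim\frac1{1-r}$ as soon as $B$ has density zero. The only consequence of $d_n\ge0$ you use is $|D(z)|\le D(r)+C$. That bound keeps the main term $\frac1{1-x}$ and cannot see $B$ at all.

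\textbf{The coefficient bookkeeping fails for a more basic reason.} The inequality $R_{B,2}(n)-R_{B,2}(n-1)\ge 2\cdot 1_B(n)-1$ holds for all large $n$ when $A$ is finite, since then $R_{B,2}(n)=n+1-2|A|$, and in that case $B$ is infinite. So no argument using only this inequality and ``$B$ infinite'' can give a contradiction. The hypothesis that $A$ is infinite has to enter, and your sketch never uses it. Concretely, the increment may be $-1$ at every $n\in A$, so summation yields only $R_{B,2}(N)\ge 2B(N)-N-O(1)$, which says nothing for sparse $B$.

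\textbf{The missing idea is parity, which removes the main term exactly.} The ordered pairs $(a,n-a)$ and $(n-a,a)$ pair off, so $R_{A,2}(2m)\equiv 1_A(m)$ and $R_{A,2}(2m+1)\equiv 0\pmod 2$. Hence $d_n\equiv 1_A(\lfloor n/2\rfloor)\pmod 2$, and monotonicity gives $t_n:=d_n-1_A(\lfloor n/2\rfloor)\ge0$ for large $n$. On the other hand, $R_{A,2}(N)\le 2\,\#\{a\in A:a<N/2\}+1_A(N/2)$, with $1_A(N/2)=0$ for odd $N$. This gives
\[
\sum_{n\le N}t_n=R_{A,2}(N)-A(\lfloor N/2\rfloor)-A(\lfloor (N-1)/2\rfloor)\le 0 .
\]
So $t_n=0$ eventually. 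That is, $(1-x)f(x)^2-(1+x)f(x^2)=W(x)$ is a polynomial; the $\frac1{1-x}$ terms cancel here. Multiplying by $1-x$ gives $g(x)^2=g(x^2)+(1-x)W(x)$.

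Now your Parseval idea works. With $I(r)=\sum_n\varepsilon_n^2r^{2n}$ and $z=re^{i\theta}$,
\[
I(r)=\int_0^{2\pi}\bigl|g(r^2e^{2i\theta})+(1-z)W(z)\bigr|\,\frac{d\theta}{2\pi}\le\sqrt{I(r^2)}+C\le\sqrt{I(r)}+C .
\]
So $I(r)$ is bounded, only finitely many $\varepsilon_n$ are nonzero, and $1_A$ is eventually constant. Since $A$ is infinite, $A$ is cofinite. The paper itself only cites this theorem from Erd\H{o}s--S\'ark\"ozy--S\'os and gives no proof.
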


Related versions for unordered and restricted unordered two-fold representation functions were also studied by Erd\H{o}s, S\'{a}rk\"{o}zy and S\'{o}s.

For $h\ge 3$, based on the Rudin--Shapiro sequence, Dombi \cite{Dombi} gave a set of density $1/2$ whose higher representation functions are eventually increasing.

\begin{theoremB}[Dombi]
There exists a set $A\subseteq \mathbb N$ of asymptotic density $1/2$ such that, for every fixed integer $h>4$, the function $R_{A,h}(n)$ is eventually increasing.
\end{theoremB}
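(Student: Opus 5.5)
Here $A$ is Dombi's set built from the Rudin--Shapiro sequence. The plan is to write the generating function of $A$ as half of $1/(1-x)$ plus a small oscillating error, expand $G_A(x)^h$ binomially, and show that the difference $R_{A,h}(n)-R_{A,h}(n-1)$ is a positive main term of order $n^{h-2}$ plus error terms of strictly smaller order.

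\emph{Setup.} Let $r(m)\in\{\pm1\}$ be the Rudin--Shapiro sequence and put $A=\{m\ge 0:\ r(m)=1\}$. The two facts we use are the partial-sum bound $\bigl|\sum_{m<N} r(m)\bigr|\ll \sqrt N$ and the uniform bound
\[
\Bigl|\sum_{m<N} r(m)e^{im\theta}\Bigr|\le C\sqrt N\qquad(\text{all }\theta\in\mathbb R).
\]
The second gives density $1/2$. Write $R(x)=\sum_m r(m)x^m$. Then
\[
G_A(x)=\tfrac12\Bigl(\tfrac1{1-x}+R(x)\Bigr),
\]
and hence
\[
(1-x)G_A(x)^h=2^{-h}\sum_{k=0}^{h}\binom hk (1-x)^{-(h-1-k)}R(x)^k .
\]
The coefficient of $x^n$ on the left is $R_{A,h}(n)-R_{A,h}(n-1)$. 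The $k=0$ term contributes $2^{-h}\binom{n+h-2}{h-2}\sim n^{h-2}/(2^h(h-2)!)$. It therefore suffices to show that every term with $k\ge1$ has $x^n$-coefficient $o(n^{h-2})$. The hypothesis $h>4$ leaves room to spare; the argument below would in fact work for $h\ge3$.

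\emph{The term $k=1$.} Handle it directly rather than analytically. The coefficients of $(1-x)^{-(h-2)}R(x)$ are $(h-2)$-fold iterated partial sums of $r$. Starting from $|\sum_{m\le N}r(m)|\ll\sqrt N$ and summing $h-3$ more times, they are $\ll n^{h-3+1/2}=o(n^{h-2})$.

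\emph{The terms $k\ge2$.} Use Cauchy's formula on the circle $|x|=\rho=1-1/n$. The coefficient of $x^n$ is at most
\[
\rho^{-n}\frac1{2\pi}\int_0^{2\pi}|1-x|^{-j}|R(x)|^k\,d\theta,\qquad j=h-1-k,
\]
and $\rho^{-n}\le e$. Three estimates then apply:
\begin{itemize}
\item By partial summation from the uniform Rudin--Shapiro bound, $\sup_{|x|=\rho}|R(x)|\ll\sum_N \rho^N N^{-1/2}\ll\sqrt n$.
\item By Parseval, $\int|R|^2\,d\theta=2\pi\sum_m\rho^{2m}\ll n$.
\item For $j\ge1$, $\int|1-x|^{-2j}\,d\theta\ll n^{2j-1}$.
\end{itemize}
Pull out $k-1$ factors of $|R|$ in sup norm and apply Cauchy--Schwarz to the rest. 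This gives the bound
\[
n^{(k-1)/2}\cdot n^{j-1/2}\cdot n^{1/2}=n^{h-3/2-k/2}\le n^{h-5/2}.
\]
The cases $j\le0$ (that is, $k\ge h-1$) are easier: $|1-x|^{-j}\le 2^{|j|}$ and the same bounds apply.

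\emph{Conclusion and main obstacle.} Combining the three parts gives
\[
R_{A,h}(n)-R_{A,h}(n-1)=\frac{n^{h-2}}{2^h(h-2)!}\bigl(1+o(1)\bigr),
\]
which is positive for large $n$. The delicate point is the balance of exponents at $k=1,2$. A crude bound $|1-x|^{-j}\le n^j$ loses exactly the needed margin, so one must use Cauchy--Schwarz together with the $L^2$ norm of $|1-x|^{-j}$, and treat $k=1$ through the partial sums. I expect verifying the sup bound for $R$ on $|x|=\rho$ and the singular integral near $\theta=0$ to be the technical core.
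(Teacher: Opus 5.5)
The paper gives no proof of Theorem~B. It is quoted from Dombi's article, so there is no in-paper argument to compare with, and your proposal has to stand on its own. Most of it does. The following parts are correct, given the uniform Rudin--Shapiro bound:
the decomposition $(1-x)G_A(x)^h=2^{-h}\sum_{k}\binom hk(1-x)^{-(h-1-k)}R(x)^k$;
the main term $2^{-h}\binom{n+h-2}{h-2}$;
the iterated partial sums for $k=1$;
the Cauchy--Schwarz bound $n^{h-3/2-k/2}$ for $2\le k\le h-2$, where $j\ge 1$.

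The problem is the case you call easier.
\begin{itemize}
\item \emph{The stated bound does not carry over.} For $j\le 0$ the estimate $\int|1-x|^{-2j}\,d\theta\ll n^{2j-1}$ is false, since the integral is of order $1$. So the exponent $h-3/2-k/2$ does not apply there.
\item \emph{What your bounds actually give.} The sup and $L^2$ bounds yield $\int|1-x|^{-j}|R|^k\,d\theta\le 2^{|j|}\sup|R|^{k-2}\int|R|^2\,d\theta\ll n^{k/2}$. That is $n^{(h-1)/2}$ for the term $R(x)^{h-1}$ and $n^{h/2}$ for the term $(1-x)R(x)^h$.
\item \emph{This is exactly where $h>4$ is used.} These bounds are $o(n^{h-2})$ exactly when $h>3$ and $h>4$, respectively. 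So your proof is valid for $h\ge 5$ once this case is written out. The hypothesis $h>4$ is consumed here, not ``with room to spare.''
\item \emph{The $h\ge 3$ remark is wrong.} For $h=4$ the top term is only $O(n^2)$ against a main term of order $n^2$. For $h=3$ the terms $R(x)^2$ and $(1-x)R(x)^3$ are only $O(n)$ and $O(n^{3/2})$ against a main term of order $n$.
\end{itemize}
This loss is not a matter of bookkeeping. By H\"older, $\frac1{2\pi}\int|R|^k\,d\theta\ge(1-\rho^2)^{-k/2}\asymp n^{k/2}$, so no bound on $\int|F|$ over the circle can do better. Going below $h=5$ would require genuine cancellation in the coefficients of $R(x)^k$.

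Two minor slips, neither affecting the conclusion:
\begin{itemize}
\item $(1-1/n)^{-n}$ decreases to $e$ from above, so $\rho^{-n}\le e$ is false; use $\rho^{-n}\le 4$ for $n\ge 2$ instead.
\item Abel summation gives $\sup_{|x|=\rho}|R(x)|\le(1-\rho)\sum_{N\ge1}C\sqrt N\rho^{N-1}\ll\sqrt n$, not the sum you wrote.
\end{itemize}
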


Dombi \cite{Dombi} also proposed the following conjectural obstruction to strict monotonicity in the co-infinite case.

\begin{conjectureD}
If $\mathbb N\setminus A$ is infinite, then $R_{A,h}(n)$ cannot be eventually strictly increasing.
\end{conjectureD}

This conjecture is now known to be false. Shallit \cite{ShallitDombi} constructed co-infinite sets with strictly increasing three-fold representation functions by methods from automata theory and logic.

\begin{theoremC}[Shallit]
There exists a co-infinite set $A\subseteq \mathbb N$ such that $R_{A,3}(n)$ is eventually strictly increasing.
\end{theoremC}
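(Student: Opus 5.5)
The plan is to prove Theorem~C with the explicit set $\BaseB$ from the abstract. The complement $\mathbb N\setminus\BaseB=\bigcup_{k\ge0}[3\cdot 4^k,4^{k+1})$ is clearly infinite. So everything reduces to showing that $\Delta(n):=R_{\BaseB,3}(n+1)-R_{\BaseB,3}(n)>0$ for all large $n$. The strategy is to prove a linear lower bound $\Delta(n)\ge cn$, which is also the form later needed for the convolution argument for $h>3$.

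\textbf{Exact description of the generating function.} Since $\BaseB=\{0\}\cup\bigcup_{k\ge0}[4^k,3\cdot4^k)$, we have
\[
(1-x)G_{\BaseB}(x)=1-x+\sum_{k\ge0}\bigl(x^{4^k}-x^{3\cdot4^k}\bigr),
\]
and therefore
\[
\sum_n \Delta(n)x^{n+1}=(x-1)\,G_{\BaseB}(x)^3+\text{(boundary term)}=-\frac{P(x)^3}{(1-x)^2},
\]
where $P(x)=(1-x)G_{\BaseB}(x)$ has coefficients in $\{0,\pm1\}$, supported on $\{0,1\}\cup\{4^k,3\cdot4^k\}$. Consequently, $\Delta(n)$ is a signed sum over triples $(u,v,w)$ of endpoints of the second-order linear weights $(n+1-u-v-w)_+$. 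In other words, $-\Delta$ is an explicit piecewise linear function of $n$ with breakpoints at sums of three endpoints.

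\textbf{Scaling.} Fix $k$ and write $n=4^k t$ with $t\in[1,4)$. Endpoints of size at most $4^{k-M}$ contribute only a bounded-relative perturbation. More precisely, the small blocks together act like a fixed finite configuration near $0$, and their effect on $\Delta(n)$ is $O(4^{k-M})$ times a bounded count. Hence
\[
\Delta(4^kt)=4^k\,\Phi(t)+O\bigl(4^{k}\cdot 4^{-M}\bigr)+O(k^{2}),
\]
where $\Phi$ is the limiting piecewise linear function produced by the endpoints $\{4^j,3\cdot4^j:j\le k\}$ rescaled by $4^{-k}$, that is, points $4^{-i}$ and $3\cdot4^{-i}$ accumulating at $0$. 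The crux is then the inequality $\Phi(t)\ge c_0>0$ on $[1,4]$. Because $\Phi$ is piecewise linear, it suffices to check this at its breakpoints in $[1,4]$ after truncating at depth $M$, with a tail estimate. This is a finite computation, and I would carry it out either by hand-organized case analysis over which triples of top blocks (indices $k,k-1,k-2$) contain summands, or by exact machine evaluation. Choosing $M$ large enough that the error is below $c_0/2$ gives $\Delta(n)\ge c_0 n/8$ for $n\ge n_0$. The finitely many $n<n_0$ are then verified directly, yielding strict monotonicity for all $n\ge1$.

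\textbf{Main obstacle.} I expect the hard step to be positivity of $\Phi$ near the ends of $[1,4)$, especially near $t=3$ and $t\to4^{-}$, where the top block stops contributing and the scale wraps around. There the margin may be small, and the contributions of lower blocks must be tracked exactly rather than bounded crudely. To handle this, I would first compare $\Phi(t)$ at $t\to4^-$ with $4\Phi(1)$, using self-similarity $t\mapsto 4t$ so that the two ends glue consistently. I would then do an explicit count of the representations $n=a+b+c$ in which the largest summand lies in the top block versus the next block, in order to exhibit a uniform positive gain.
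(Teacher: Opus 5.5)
Your route is different from the paper's and can be completed. As written, though, the decisive inequality $\Phi\ge c_0>0$ on $[1,4]$ is only deferred to a computation you have not done, and that inequality is the whole content of the theorem.

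\textbf{How the paper does it.} The paper never forms a limit profile. From \eqref{eq:C} it extracts exact $4$-adic recurrences (Lemma~\ref{lem:alpha-rec}). These express $\alpha_{4N+s}$ (your $\Delta$) through $\alpha_{N-2},\alpha_{N-1},\alpha_N$ with total weight $4$, up to corrections in the two-fold differences $\beta_j$ and in $c_{N-1}-c_{N-2}$. Once $-1\le\beta_j\le 2$ is proved (Lemma~\ref{lem:beta-bound}), the bound $L(x)=x/5+4$ propagates by itself, e.g.\ $\alpha_{4N}\ge 2L(N-2)+2L(N-1)-6\ge L(4N)$. So one exact integer check on $78\le n\le 319$ gives $\alpha_n\ge n/5+4$ for all $n\ge 78$. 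A further check below $78$ gives $\alpha_n>0$ for every $n\ge0$. No limit object, tail estimate or error constant is needed.

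\textbf{What your route buys and what it costs.} Your approach gives more information: an asymptotic shape $\Delta(4^kt)=4^k\Phi(t)+O(k^2)$. Here $\Phi(t)=\int(t-s)_+\,d\lambda^{*3}(s)$ for the two-sided measure $\lambda=\sum_{j\in\mathbb Z}(\delta_{4^j}-\delta_{3\cdot 4^j})$. This $\Phi$ satisfies $\Phi(4t)=4\Phi(t)$ exactly, so the gluing at $t\to4^-$ that worries you is automatic. Your route also avoids the two-fold lemma, and for the ``eventually'' statement the approximation error need only be $o(4^k)$. The cost is a truncated evaluation of $\Phi$ at its breakpoints with an explicit tail bound, plus explicit constants in the $O(k^2)$ term if you want all $n\ge1$. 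The computation would succeed comfortably, and the margin is not small: Lemma~\ref{lem:alpha-lower} forces $\Phi(t)\ge t/5$.

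\textbf{Two details need repair.}

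First, the sign. One has $\sum_{n\ge0}\Delta(n)x^{n+1}=(1-x)G_{\BaseB}(x)^3-1$. Let $\epsilon_e$ be the coefficients of $(1-x)G_{\BaseB}(x)=1-x^3+\sum_{k\ge1}(x^{4^k}-x^{3\cdot 4^k})$; the $x$ terms cancel, so $e=1$ is not in the support. Then $\Delta(n)=\sum\epsilon_u\epsilon_v\epsilon_w\,(n+2-u-v-w)_+$ with a plus sign; check $\Delta(0)=2$.

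Second, the triples involving small blocks are not bounded in number. What controls the truncation is a per-triple bound. A triple whose smallest block $x^{4^j}-x^{3\cdot4^j}$ has scale $4^j$ contributes at most $8\cdot 4^j$, because the weight is $1$-Lipschitz and the other two factors have total variation at most $2$ each. There are $O(m^2)$ triples at each depth $m\ge M$, so the truncation error is $O(M^2 4^{k-M})$. That is still enough for your argument.
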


Let
\[
\BaseB=\{0\}\cup\{m\ge 1:\text{ the base-4 expansion of }m\text{ begins with }1\text{ or }2\}.
\]
For Shallit's result one may take $A=\BaseB$; in fact $R_{\BaseB,3}(n)$ is strictly increasing.
Shallit also remarked that $R_{\BaseB,h}(n)$ is eventually strictly increasing for every fixed integer $h\ge 3$.
In this paper, we improve this result by giving a polynomial order of
$R_{\BaseB,h}(n+1)-R_{\BaseB,h}(n)$ for every integer $h\ge 3$.

\begin{theorem}\label{thm:main}
For every fixed integer $h\ge 3$, there exist constants $c_h,C_h>0$, depending only on $h$, such that for every integer $n\ge 1$,
\[
 c_h n^{h-2}\le R_{\BaseB,h}(n+1)-R_{\BaseB,h}(n)\le C_h n^{h-2}.
\]
Moreover,
\[
 R_{\BaseB,h}(n+1)-R_{\BaseB,h}(n)>0
\]
for every $n\ge 0$.
\end{theorem}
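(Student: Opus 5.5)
The plan follows the abstract: first establish linear two-sided bounds for $h=3$, then induct on $h$ through a convolution identity. Write $D_h(n)=R_{\BaseB,h}(n+1)-R_{\BaseB,h}(n)$, with $R_{\BaseB,h}(-1)=0$, so that $\sum_n D_h(n)x^{n+1}=(1-x)G_{\BaseB}(x)^h$. The identity is
\[
(1-x)G_{\BaseB}(x)^{h}=\bigl((1-x)G_{\BaseB}(x)^{3}\bigr)\,G_{\BaseB}(x)^{h-3},
\]
which gives, for $h\ge 4$,
\[
D_h(n)=\sum_{k=0}^{n} D_3(n-k)\,R_{\BaseB,h-3}(k).
\]
The boundary term $D_3(-1)=R_{\BaseB,3}(0)=1$ contributes the positive quantity $R_{\BaseB,h-3}(n+1)$, which I keep separately.

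\textbf{Base case $h=3$.} Here I need $c\,n\le D_3(n)\le C\,n$ for $n\ge1$, and $D_3(0)>0$. The upper bound is easy. $R_{\BaseB,3}(n+1)-R_{\BaseB,3}(n)$ is at most the number of pairs $(a_1,a_2)\in\BaseB^2$ with $a_1+a_2\le n+1$ such that $n+1-a_1-a_2\in\BaseB$. Indeed, writing each representation with its third coordinate determined, the difference is bounded by $\#\{(a_1,a_2):a_1+a_2\le n+1\}\ll n^2$. That is too weak, so instead I write
\[
D_3(n)=\sum_{a_1+a_2\le n+1}\bigl(\mathbf 1_{\BaseB}(n+1-a_1-a_2)-\mathbf 1_{\BaseB}(n-a_1-a_2)\bigr).
\]
Now I sum over $s=a_1+a_2$ and use that $\mathbf 1_{\BaseB}(m+1)-\mathbf 1_{\BaseB}(m)$ is nonzero only at block boundaries $m=4^j-1$ (where it is $+1$) and $m=3\cdot4^j-1$ (where it is $-1$). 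This gives
\[
D_3(n)=\sum_{j}\Bigl(R_{\BaseB,2}(n+1-4^j)-R_{\BaseB,2}(n+1-3\cdot 4^j)\Bigr)+\text{(edge term)}.
\]
Since $R_{\BaseB,2}(m)\le m+1$, each term is $O(n)$. The terms with $4^j\le n$ decay geometrically in the right grouping, because $R_{\BaseB,2}(m)$ is $O(m)$. So the upper bound $D_3(n)\le Cn$ follows.

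For the lower bound I use the self-similarity of $\BaseB$: $\BaseB\cap[4^k,3\cdot4^k)$ is a full interval, and $\BaseB$ has no elements in $[3\cdot4^k,4^{k+1})$. I scale $n$ into $[4^k,4^{k+1})$ and split into a bounded number of cases according to the leading base-4 digits. In each case the dominant term $R_{\BaseB,2}(n+1-4^j)$, with $4^j$ the largest power not exceeding $n$, is of order $n$ whenever $n+1-4^j\gg n$. The case where $n$ is near $4^k$ needs a second block, $j=k-1$, combined with the monotonicity structure of $R_{\BaseB,2}$ on intervals. I expect this step to be the main obstacle: the precise lower bound $D_3(n)\ge cn$ uniformly across all digit patterns. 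Shallit verified positivity by automata methods, but I would aim for a direct counting argument, possibly with a finite computer-checked range for small $n$. As an alternative route, I would estimate $D_3(n)$ by the measure of a continuous analogue: rescaling by $4^k$, $\BaseB$ approximates a union of intervals $[4^{-i},3\cdot4^{-i})$, and $D_3(n)/n$ approximates the derivative of the density of the sum of three such variables. That derivative is bounded below on compact ranges, and the error is $O(1)$.

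\textbf{Induction for $h\ge4$.} Given the base case, every term in the convolution is nonnegative. For the upper bound, I use $R_{\BaseB,h-3}(k)\le (k+1)^{h-4}$ and $D_3(m)\le C(m+1)$, and the sum is $\ll n^{h-2}$. For the lower bound, I restrict to $k\in[n/4,n/2]$. There $R_{\BaseB,h-3}(k)\gg k^{h-4}$, since $\BaseB$ has positive lower density: at least $1/3$ on every dyadic-type range, so $R_{\BaseB,h-3}(k)\ge$ the count of tuples from the large interval blocks, which is $\gg k^{h-4}$. Also $D_3(n-k)\ge c(n-k)\gg n$. The restricted sum then gives $\gg n\cdot n\cdot n^{h-4}=n^{h-2}$. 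For $h=4$, $R_{\BaseB,1}(k)=\mathbf 1_{\BaseB}(k)$, and the interval $[n/4,n/2]$ contains $\gg n$ elements of $\BaseB$, so the bound still holds. Strict positivity for all $n\ge0$, including small $n$, follows from $D_3\ge0$, $D_3(0)>0$, and the positive boundary term $R_{\BaseB,h-3}(n+1)$, which is $\ge1$ whenever $n+1$ is representable; for $h-3\ge2$ it is always $\ge1$ since $1\in\BaseB$ and $0\in\BaseB$. For $h=4$ one uses instead the term $k=0$, which gives $D_3(n)\,\mathbf 1_{\BaseB}(0)=D_3(n)>0$.
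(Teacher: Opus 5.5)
\textbf{The gap: the base case $h=3$ is not proved.} This case carries essentially all the content of the theorem. You do not prove $D_3(n)\ge cn$, nor $D_3(n)>0$ for all $n$. Your convolution step for $h\ge4$ coincides with the paper's and rests entirely on this base case.

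\emph{Why the dominant-term heuristic fails.} $R_{\BaseB,2}$ is far from monotone. Put $\beta_m:=R_{\BaseB,2}(m+1)-R_{\BaseB,2}(m)$. Then $\beta_m=-1$ for all $51\le m\le 58$ and for all $207\le m\le 238$, and these runs grow by a factor $4$ at each scale. Written out exactly, your identity is
\[
D_3(n)=R_{\BaseB,2}(n+1)-R_{\BaseB,2}(n-2)+\sum_{j\ge1}\bigl(R_{\BaseB,2}(n+1-4^j)-R_{\BaseB,2}(n+1-3\cdot4^j)\bigr).
\]
Individual summands can be negative of order $n$: for $n=254$ the $j=2$ summand is $R_{\BaseB,2}(239)-R_{\BaseB,2}(207)=-32$. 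So positivity comes from a global cancellation that you never control.

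\emph{The continuous analogue is only sketched.} To work, it would need three things you do not supply:
\begin{itemize}
\item a proof that the limiting derivative is $\ge ct$ on $[1,4]$, which is not evident given these decreasing runs;
\item a proved discretisation error (your $O(1)$ is only asserted);
\item a finite check for small $n$.
\end{itemize}

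\emph{The upper bound also has a hole.} $R_{\BaseB,2}(m)=O(m)$ over $\asymp\log n$ summands gives only $O(n\log n)$. Regroup as
\[
D_3(n)=R_{\BaseB,2}(n+1)-\sum_{j\ge0}\bigl(R_{\BaseB,2}(n+1-3\cdot4^j)-R_{\BaseB,2}(n+1-4^{j+1})\bigr).
\]
Each bracket is a sum of $4^j$ consecutive $\beta_m$, so you need a uniform bound $\beta_m\ge -C$. That is the nontrivial half of Lemma~\ref{lem:beta-bound}; the trivial bound is only $|\beta_m|=O(\log m)$.

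\textbf{The missing idea is the paper's $4$-adic recursion.} Extract coefficients from $C(x)=P(x)C(x^4)-x^3$ and from its square and cube. This expresses $\beta_{4n+s}$ through $\beta_{n-1},\beta_n$, and $\alpha_{4n+s}=D_3(4n+s)$ through $\alpha_{n-2},\alpha_{n-1},\alpha_n$ plus bounded multiples of $\beta$'s and $c$'s (Lemmas~\ref{lem:beta-rec} and~\ref{lem:alpha-rec}).
\begin{itemize}
\item Induction on these recurrences gives $-1\le\beta_m\le2$.
\item In every residue class the $\alpha$-coefficients are nonnegative and sum to $4$. So a linear bound such as $L(x)=x/5+4$ satisfies $2L(N-2)+2L(N-1)-6\ge L(4N)$ and its analogues. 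It therefore propagates from a finite machine-checked window ($78\le n\le 319$) to all larger $n$.
\item The upper bound $3n+2$ propagates the same way, using $\beta_m\ge-1$.
\end{itemize}

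\textbf{Two smaller points in your induction step.}
\begin{itemize}
\item Positive lower density alone does not give the pointwise bound $R_{\BaseB,h-3}(k)\gg k^{h-4}$. Bound the sum of $R_{\BaseB,h-3}(k)$ over your range of $k$ instead, by counting tuples from $\BaseB\cap[n/(4(h-3)),\,n/(2(h-3))]$.
\item $0,1\in\BaseB$ does not imply $R_{\BaseB,h-3}(n+1)\ge1$. You do not need it: once $D_3>0$ is known, the $k=0$ term $D_3(n)R_{\BaseB,h-3}(0)=D_3(n)$ gives positivity for every $h\ge4$.
\end{itemize}
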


\begin{remarkintro}
The exponent $h-2$ is the natural scale. For the full set $\mathbb N$,
\[
R_{\mathbb N,h}(n)=\binom{n+h-1}{h-1},
\]
and hence
\[
R_{\mathbb N,h}(n+1)-R_{\mathbb N,h}(n)
=\binom{n+h-1}{h-2}\asymp_h n^{h-2}.
\]
Thus Theorem~\ref{thm:main} says that the base-$4$ set $\BaseB$ has the same first-difference order as $\mathbb N$, despite having an infinite complement with long gaps.
\end{remarkintro}

In Dombi's paper, the following problem is also posed.

\begin{problemD} Let $h \geq 3$. Is there any set $A \subset \mathbb{N}$ with $R_{A,h}(n)$ eventually increasing and satisfying

$$
\lim _{n \rightarrow \infty} \frac{A(n)}{n}=\alpha \quad \text { with } \alpha \neq \frac{1}{2} ?
$$
\end{problemD}

For the set $\BaseB$ one has
\[
\liminf_{n\to\infty}\frac{\BaseB(n)}{n}=\frac23,
\qquad
\limsup_{n\to\infty}\frac{\BaseB(n)}{n}=\frac89.
\]
Hence $\lim_{n\to\infty}\BaseB(n)/n$ does not exist.

In the following theorem, we give a new set $\NewB$ such that $\lim_{n\to\infty}\NewB(n)/n$ exists and is not equal to $1/2$, and such that $R_{\NewB,h}(n)$ is strictly increasing for every fixed integer $h\ge 3$. This answers Dombi's problem in the form stated above. The more delicate case of prescribed density $d\in [0,1)$ remains open and is formulated in Problem~\ref{prob:density} below.

\begin{theorem}\label{thm:main2}
There exists a co-infinite set $\NewB \subset \mathbb{N}$ such that
\[
\lim_{n \rightarrow \infty} \frac{\NewB(n)}{n}=1
\]
and, for every integer $h\ge 3$,
\[
R_{\NewB,h}(n+1)>R_{\NewB,h}(n)\qquad(n\ge 0).
\]
\end{theorem}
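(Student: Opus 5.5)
The plan is to take $\NewB$ to be $\mathbb N$ with a very sparse sequence of \emph{gaps} removed, and then verify strict monotonicity by working with generating functions. Increasing in $n$ means that $(1-x)G_{\NewB}(x)^h$ has positive coefficients. Because $0\in\NewB$, the case $h\ge 4$ will follow from the case $h=3$ by a convolution.

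\textbf{Construction.} Choose integers $1\le a_1<b_1<a_2<b_2<\cdots$ and put $\ell_k=b_k-a_k$. Define
\[
\NewB=\mathbb N\setminus\bigcup_{k\ge1}[a_k,b_k).
\]
We require $\ell_k\to\infty$, so that $\NewB$ is co-infinite, and $\ell_k/a_k\to0$. We also require the gaps to be so sparse (for instance $a_{k+1}\ge 2b_k$ and $\ell_k\le a_k/(10k)$) that for every $m$
\[
E(m):=\#\bigl([0,m]\setminus\NewB\bigr)\le \sum_{a_k\le m}\ell_k\le \tfrac{m}{5}
\qquad\text{and}\qquad \frac{E(m)}{m}\to0 .
\]
The second condition gives $\NewB(n)/n\to1$. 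It holds because $\sum_{a_k\le m}\ell_k$ is dominated by its last term, which is $o(a_k)$; this is the routine density check.

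\textbf{The case $h=3$.} We have
\[
(1-x)G_{\NewB}(x)=1-\sum_k\bigl(x^{a_k}-x^{b_k}\bigr).
\]
Multiplying by $G_{\NewB}^2$ gives
\[
D_3(n):=R_{\NewB,3}(n+1)-R_{\NewB,3}(n)=R_{\NewB,2}(n)-\sum_{k}\#\{(c,c')\in\NewB^2:\ c+c'\in[\,n-b_k+1,\ n-a_k\,]\}.
\]
I will bound the two sides separately; this is the main step.

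The subtracted sum is controlled through the identity
\[
R_{\NewB,2}(m)-R_{\NewB,2}(m-\ell)\le\sum_{m-\ell<j\le m}R_{\NewB,2}(j)-R_{\NewB,2}(j-1),
\]
together with the observation that, for the full set, each difference $R_{\mathbb N,2}(j)-R_{\mathbb N,2}(j-1)$ equals $1$. If comparison with $\mathbb N$ turns out to be too lossy, a direct count works instead: the $k$-th term equals $\sum_{j}R_{\NewB,2}(j)$ over a window of length $\ell_k$ ending at $n-a_k$. I will bound it by $\ell_k$ times a crude estimate, or by grouping the $c$ and $c'$ contributions. In either form the target inequality is
\[
\sum_k(\text{$k$-th term})\le C\sum_{a_k\le n}\ell_k .
\]
On the other side, $R_{\NewB,2}(n)\ge n+1-2E(n)$.

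Since $E(n)$ and $\sum_{a_k\le n}\ell_k$ are both at most $n/5$, positivity follows with room to spare once the sparseness constants are chosen small enough. The expected obstacle is precisely this step: obtaining a clean linear bound for the $k$-th subtracted term, uniformly for all $n\ge0$ including small $n$. For $n<a_1$ there is nothing to check, since then $\NewB\supseteq[0,n+1]$. For larger $n$ the danger is a window $[n-b_k,n-a_k]$ where $R_{\NewB,2}$ happens to be large. I would handle this by exploiting the sparseness, so that at most one gap is ``active'' at full size, and by using $R_{\NewB,2}(j)\le j+1$.

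\textbf{The case $h\ge4$.} Write $(1-x)G_{\NewB}^h=G_{\NewB}^{h-3}\cdot(1-x)G_{\NewB}^3$. Then
\[
D_h(n)=\sum_{j=0}^{n}R_{\NewB,h-3}(j)\,D_3(n-j)\ \ge\ R_{\NewB,h-3}(0)\,D_3(n)=D_3(n)>0 .
\]
Here every summand is nonnegative by the case $h=3$, and $R_{\NewB,h-3}(0)=1$ because $0\in\NewB$ (with the convention $G^0=1$ when $h=3$). This gives $R_{\NewB,h}(n+1)>R_{\NewB,h}(n)$ for all $n\ge0$ and all $h\ge3$. Together with co-infiniteness and $\NewB(n)/n\to1$, this proves Theorem~\ref{thm:main2}.
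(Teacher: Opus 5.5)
Your reduction from $h\ge 4$ to $h=3$ is the paper's convolution argument. Your displayed sum omits the nonnegative boundary term $R_{\NewB,h-3}(n+1)R_{\NewB,3}(0)$, which is harmless. The gap is in the case $h=3$, which is the whole content, and there the argument does not go through as written.

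First, the identity you display for $D_3(n)$ is wrong. Reading off the coefficient of $x^{n+1}$ in $G_{\NewB}(x)^2\bigl(1-\sum_k(x^{a_k}-x^{b_k})\bigr)$ gives
\[
D_3(n)=R_{\NewB,2}(n+1)-\sum_k\bigl(R_{\NewB,2}(n+1-a_k)-R_{\NewB,2}(n+1-b_k)\bigr).
\]
These are differences of $R_{\NewB,2}$ at distance $\ell_k$, not window sums. Already for $n\le a_1-2$ your formula returns $n+1$ instead of the true value $n+2$. For the window sums you wrote, the target $\sum_k(\cdot)\le C\sum_{a_k\le n}\ell_k$ is false. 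Each summand in a window ending at $n-a_k$ has size about $n-a_k$, so the $k$-th term is of order $\ell_k(n-a_k)$. Your fallback ``$\ell_k$ times $R_{\NewB,2}(j)\le j+1$'' gives $\ell_k n$, which swamps the main term $\approx n$.

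Even with the corrected identity, comparison with $\mathbb N$ is not automatic. The missing ingredient is control of the three-fold count of removed elements. Let $E=\mathbb N\setminus\NewB$, let $E(m)$ be its counting function, and let $r_i(m)$ be the number of ordered representations of $m$ as a sum of $i$ elements of $E$. Then $R_{\NewB,2}(m)=m+1-2E(m)+r_2(m)$, so $R_{\NewB,2}(m)-R_{\NewB,2}(m-\ell)$ can exceed $\ell$ by as much as $r_2(m)$. The corrected identity is equivalent to
\[
D_3(n)=N+1-3E(N)+3r_2(N)-r_3(N)+r_3(N-1)\qquad(N=n+1).
\]
With long gaps, $r_3(N)$ alone can be of order $\ell_k^2$ (three elements from one gap). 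Your condition $\ell_k\le a_k/(10k)$ allows this to be much larger than $N$. So you cannot bound $r_3(N)$ crudely; you must use the cancellation with $r_3(N-1)$.

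One way to do this: since $E$ is a union of intervals, $[m\in E]-[m-1\in E]>0$ only when $m=a_k$. Hence $r_3(N)-r_3(N-1)\le\sum_k r_2(N-a_k)\le K(N)E(N)$, where $K(N)=\#\{k:a_k\le N\}$. Your conditions $a_{k+1}\ge 2b_k$ and $\ell_k\le a_k/(10k)$ then give $(K(N)+3)E(N)\le N/2$; the factor $1/k$ is exactly what makes this work. Nothing of this kind is in your proposal. ``At most one gap is active'' is a heuristic, not an estimate, and the real danger is not $R_{\NewB,2}$ being large on a window.

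Long gaps are also unnecessary: co-infiniteness only needs infinitely many removed points. The paper removes the isolated points $2^j$ ($j\ge 10$). Then $E(N)=O(\log N)$, the trivial bound $r_3(N)\le E(N)^2$ suffices, and positivity reduces to $2^{m+9}+1>m^2+3m$, with no cancellation needed.
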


\begin{remark}
In the above theorem, we construct $\NewB=\mathbb N\setminus \{2^j:\ j\ge 10\}$. It would be interesting to determine whether the set $\mathbb N\setminus \{4^{n+2}:\ n\ge 0\}$ from Bell and Shallit \cite{BellShallit} also has this property.
\end{remark}

By numerical computations, we can see that the set $A$ in Theorem B by Dombi does not satisfy
$R_{A,3}(n)$ strictly increasing, since $R_{A,3}(22)=78$ and $R_{A,3}(23)=75$. Motivated by this,
we pose the following problems for further research.

\begin{problem}\label{prob:density} For which \(d\in[0,1)\) does there exist a set \(A\subseteq\mathbb N\) with
\[
\lim_{n\to\infty}\frac{A(n)}{n}=d
\]
such that \(R_{A,h}(n)\) is strictly increasing for every integer \(h\ge 3\)?
\end{problem}

\begin{problem} Given two real numbers $\alpha$ and $\beta$ with $0\le \alpha<\beta\le 1$, does there exist $A \subseteq \mathbb{N}$ with $\liminf\limits_{n\rightarrow +\infty}\frac{A(n)}{n}=\alpha$ and
 $\limsup\limits_{n\rightarrow +\infty}\frac{A(n)}{n}=\beta$ such that $R_{A, h}(n)$ is strictly increasing for every $h \geq 3$?
\end{problem}

In the other direction, some results further characterize the obstruction caused by sparsity: if a set $A$ is sparse, then the corresponding unordered, restricted unordered two-fold representation functions  and $R_{A,h}(n)$  cannot be monotonically increasing. For these results, one may refer to \cite{Balasubramanian,ESS4,ESS5,Kiss,KissSandorYang,Tang}.

\section{Notation and recurrences}

Throughout the rest of the paper we write
\[
R_h(n)=R_{\BaseB,h}(n),\qquad \Delta_h(n)=R_h(n+1)-R_h(n)\qquad (n\ge 0).
\]
We put $R_h(n)=0$ for $n<0$. Let
\[
c_n=R_1(n),\qquad b_n=R_2(n),\qquad a_n=R_3(n),
\]
with $a_n=b_n=c_n=0$ for $n<0$, and define
\[
\beta_j=b_{j+1}-b_j,\qquad \alpha_j=a_{j+1}-a_j\qquad(j\in\mathbb Z).
\]
Thus
\[
\alpha_{-2}=\beta_{-2}=0,\qquad \alpha_{-1}=\beta_{-1}=1.
\]

The self-similar description of $\BaseB$ gives
\[
\BaseB=\{0\}\cup \bigcup_{j\ge 0}\{4^j,4^j+1,\ldots,3\cdot 4^j-1\}.
\]
Equivalently, its indicator sequence satisfies, for $n\ge 0$,
\begin{equation}\label{eq:c-recurrence}
 c_{4n}=c_{4n+1}=c_{4n+2}=c_n,\qquad c_{4n+3}=c_n-\delta_{n,0},
\end{equation}
where $\delta_{i,j}$ is the Kronecker delta. Let
\[
C(x)=\sum_{n\ge 0}c_nx^n,\qquad P(x)=1+x+x^2+x^3.
\]
Then \eqref{eq:c-recurrence} is equivalent to the functional equation
\begin{equation}\label{eq:C}
C(x)=P(x)C(x^4)-x^3.
\end{equation}
Since the representations are ordered, the generating function of $R_h$ is $C(x)^h$.

\begin{lemma}\label{lem:beta-rec}
For every $n\ge 0$, one has
\[
\beta_{4n}=\beta_{n-1},\quad
\beta_{4n+1}=\beta_{n-1}+\delta_{n,1},\quad
\beta_{4n+2}=\beta_{n-1}-2c_n+2c_{n-1}-\delta_{n,1},\quad
\beta_{4n+3}=\beta_n.
\]
\end{lemma}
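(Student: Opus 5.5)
We work at the level of generating functions and use \eqref{eq:C}. Set $B(x)=C(x)^2=\sum_n b_nx^n$ and $D(x)=\sum_{j\in\mathbb Z}\beta_jx^{j+1}=(1-x)^{-1}\cdot$ nothing; more conveniently, $\sum_{n\ge 0}(b_n-b_{n-1})x^n=(1-x)B(x)$, so $\sum_{j\ge -1}\beta_j x^{j+1}=(1-x)B(x)$. Squaring \eqref{eq:C} gives
\[
B(x)=P(x)^2B(x^4)-2x^3P(x)C(x^4)+x^6 .
\]
Multiplying by $1-x$ and using $(1-x)P(x)=1-x^4$, we obtain
\[
(1-x)B(x)=P(x)\,(1-x^4)B(x^4)-2x^3(1-x^4)C(x^4)+x^6-x^7 .
\]
Write $F(y)=(1-y)B(y)=\sum_{m\ge 0}\beta_{m-1}y^m$. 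Then
\[
F(x)=P(x)F(x^4)-2x^3(1-x^4)C(x^4)+x^6-x^7 .
\]

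Next we extract coefficients by residue of the exponent mod $4$. The coefficient of $x^{m}$ in $F(x)$ is $\beta_{m-1}$, so $\beta_{4n+r-1}$ is the coefficient of $x^{4n+r}$. Since $P(x)F(x^4)$ contributes $\beta_{n-1}$ to every residue $r\in\{0,1,2,3\}$, the baseline is $\beta_{4n+r-1}=\beta_{n-1}$ for $r=0,1,2,3$. The term $-2x^3(1-x^4)C(x^4)=-2\sum_k (c_k-c_{k-1})x^{4k+3}$ affects only the exponents $4k+3$, giving $\beta_{4k+2}$ an extra $-2c_k+2c_{k-1}$. The term $x^6$ (with $6=4\cdot1+2$) adds $+1$ to $\beta_{5}=\beta_{4\cdot1+1}$, and $-x^7$ (with $7=4\cdot1+3$) adds $-1$ to $\beta_{6}=\beta_{4\cdot 1+2}$. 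This already yields the formulas for $\beta_{4n+1}$ and $\beta_{4n+2}$, and $\beta_{4n}=\beta_{n-1}$, after the index bookkeeping is carried out carefully: exponent $4n+1$ gives $\beta_{4n}$, exponent $4n+2$ gives $\beta_{4n+1}$, and exponent $4n+3$ gives $\beta_{4n+2}$.

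The remaining case is $\beta_{4n+3}$, the coefficient of $x^{4n+4}=x^{4(n+1)}$. Here the residue-$0$ part of $P(x)F(x^4)$ gives $\beta_{(n+1)-1}=\beta_n$, which is the claimed value.

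The main obstacle is the index bookkeeping, together with checking the small cases $n=0,1$ against the convention $\beta_{-2}=0$, $\beta_{-1}=1$ and $c_{-1}=0$. For instance, at $n=0$ we need $\beta_2=\beta_{-1}-2c_0+2c_{-1}=1-2=-1$, and this should be confirmed directly: $b_2=0$ and $b_3=c_0c_3+\cdots=0$. So we will verify a few initial values numerically from $\BaseB=\{0,1,2,4,\dots\}$ to make sure the sign and shift conventions are consistent.
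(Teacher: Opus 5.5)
Your argument is correct and is essentially the paper's proof: both square \eqref{eq:C} and read off coefficients by residue class mod $4$. The one difference is that you take differences at the level of generating functions, where $(1-x)P(x)=1-x^4$ gives $F(x)=P(x)F(x^4)-2x^3(1-x^4)C(x^4)+x^6-x^7$; all four recurrences, including the block-crossing case $\beta_{4n+3}=b_{4n+4}-b_{4n+3}$ and the small cases $n=0,1$, then follow from this single formal identity with no separate checking needed.

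Two things need fixing. Delete the garbled false start about $D(x)$. Also correct your sanity check, which is wrong as written: $b_2=c_0c_2+c_1^2+c_2c_0=3$ and $b_3=2c_1c_2=2$ (not $0$ and $0$), so $\beta_2=-1$, as the formula predicts.
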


\begin{proof}
Squaring \eqref{eq:C} gives
\begin{equation}\label{eq:C2}
C(x)^2=P(x)^2C(x^4)^2-2x^3P(x)C(x^4)+x^6.
\end{equation}
Since
\[
P(x)^2=1+2x+3x^2+4x^3+3x^4+2x^5+x^6,
\]
coefficient extraction from \eqref{eq:C2} gives, for every $n\ge 0$,
\[
\begin{aligned}
b_{4n}&=b_n+3b_{n-1}-2c_{n-1},\qquad
b_{4n+1}=2b_n+2b_{n-1}-2c_{n-1},\\
b_{4n+2}&=3b_n+b_{n-1}-2c_{n-1}+\delta_{n,1},\qquad
b_{4n+3}=4b_n-2c_n.
\end{aligned}
\]
For example, the coefficient of $x^{4n+2}$ in $P(x)^2C(x^4)^2$ is $3b_n+b_{n-1}$. The coefficients in the remaining two terms are $-2c_{n-1}$ and $\delta_{n,1}$, respectively. Subtracting consecutive formulas in the above four equations gives
\[
\beta_{4n}=\beta_{n-1},\quad
\beta_{4n+1}=\beta_{n-1}+\delta_{n,1},\quad
\beta_{4n+2}=\beta_{n-1}-2c_n+2c_{n-1}-\delta_{n,1},\quad
\beta_{4n+3}=\beta_n.
\]
The conventions $a_n=b_n=c_n=0$ for $n<0$ account for the small exceptional terms appearing at $n=1$ and $n=2$.
\end{proof}

\begin{lemma}\label{lem:alpha-rec}
For every $n\ge 0$, one has
\[
\begin{aligned}
\alpha_{4n}&=2\alpha_{n-2}+2\alpha_{n-1}-3\beta_{n-2}-\delta_{n,2},\\
\alpha_{4n+1}&=3\alpha_{n-1}+\alpha_{n-2}-3\beta_{n-2}+3(c_{n-1}-c_{n-2})+\delta_{n,2},\\
\alpha_{4n+2}&=4\alpha_{n-1}-3\beta_{n-1},\\
\alpha_{4n+3}&=\alpha_n+3\alpha_{n-1}-3\beta_{n-1}.
\end{aligned}
\]
\end{lemma}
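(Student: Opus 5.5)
Following the proof of Lemma~\ref{lem:beta-rec}, we cube \eqref{eq:C}:
\[
C(x)^3=P(x)^3C(x^4)^3-3x^3P(x)^2C(x^4)^2+3x^6P(x)C(x^4)-x^9 .
\]
We expand $P(x)^3=(1+x+x^2+x^3)^3$, which has coefficients $1,3,6,10,12,12,10,6,3,1$ in degrees $0,\dots,9$. We also use the coefficients $1,2,3,4,3,2,1$ of $P(x)^2$ and $1,1,1,1$ of $P(x)$. For each residue $r\in\{0,1,2,3\}$ we extract the coefficient of $x^{4n+r}$ and obtain $a_{4n+r}$ as a linear combination of $a_n,a_{n-1},a_{n-2}$, of $b_n,b_{n-1},b_{n-2}$, of $c_{n-1},c_{n-2}$ (or $c_n,c_{n-1}$), and of a Kronecker delta coming from $-x^9$ (which has $9=4\cdot2+1$, giving $\delta_{n,2}$ at $r=1$). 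For instance, $P^3C(x^4)^3$ contributes
\[
a_{4n}\colon\ a_n+12a_{n-1}+3a_{n-2},\qquad a_{4n+3}\colon\ 10a_n+6a_{n-1}.
\]
The middle term $-3x^3P^2C(x^4)^2$ contributes $-3$ times the coefficient of $x^{4n+r-3}$ in $P^2C(x^4)^2$. For $r=3$ this is $-3(b_n+3b_{n-1})$, and the remaining cases are analogous. The term $3x^6PC(x^4)$ contributes $3c_{n-1}$ when $r\in\{2,3\}$ and $3c_{n-2}$ when $r\in\{0,1\}$.

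Next we take the successive differences $a_{4n+1}-a_{4n}$, $a_{4n+2}-a_{4n+1}$, $a_{4n+3}-a_{4n+2}$ and $a_{4n+4}-a_{4n+3}$. For the last one we must use the $r=0$ formula at $n+1$. We then rewrite the result in terms of $\alpha_j=a_{j+1}-a_j$ and $\beta_j=b_{j+1}-b_j$. Since the coefficients of $P^3$ sum to $64=4^3$, each difference is a combination whose $a$-coefficients sum to zero, so it can be written through the $\alpha$'s. The same holds for the $b$-coefficients and the $\beta$'s. For example, in $a_{4n+3}-a_{4n+2}$ the $a$-part is $(10a_n+6a_{n-1})-(6a_n+10a_{n-1})=4\alpha_{n-1}$. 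The $b$-part is $-3[(b_n+3b_{n-1})-(2b_n+2b_{n-1})]=-3\beta_{n-1}$. The $c$-terms cancel, which matches the stated formula for $\alpha_{4n+2}$.

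The main obstacle is bookkeeping. In each residue class the $a$-coefficient vectors must be split correctly into the stated combinations; for instance, in $\alpha_{4n}$ the expression $2\alpha_{n-2}+2\alpha_{n-1}$ corresponds to the $a$-part $-2a_{n-2}+2a_n$. The leftover $c$ terms must be identified (only $\alpha_{4n+1}$ retains $3(c_{n-1}-c_{n-2})$). The delta terms must also be tracked: $-x^9$ gives $-\delta_{n,2}$ in $\alpha_{4n}$ via the shift $a_{4(n-1)+4}$ and $+\delta_{n,2}$ in $\alpha_{4n+1}$. Finally, we check the small cases $n=0,1,2$ against the conventions $a_j=b_j=c_j=0$ for $j<0$, where $\alpha_{-1}=\beta_{-1}=1$ and $\alpha_{-2}=\beta_{-2}=0$. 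Negative indices make the formal coefficient extraction valid uniformly, since all series vanish in negative degree. As a sanity check we will also verify a few values numerically, such as $a_0,\dots,a_{12}$.
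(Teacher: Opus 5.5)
Your route is the paper's: cube \eqref{eq:C}, read off $a_{4n+r}$ for $r=0,1,2,3$, subtract consecutive formulas (using the $r=0$ formula at $n+1$ for $\alpha_{4n+3}$), and regroup into $\alpha$'s and $\beta$'s. The conventions for negative indices make this valid uniformly for $n\ge 0$. The strategy is sound. However, the lemma is nothing more than this bookkeeping, so the one difference you work out in full must be right, and as written it is not.

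\textbf{The $\alpha_{4n+2}$ example.} In the $r=2$ case the middle term contributes
$-3[x^{4(n-1)+3}]P(x)^2C(x^4)^2=-3\cdot 4\,b_{n-1}=-12b_{n-1}$,
because $3$ is the only exponent of $P^2$ in the class $3 \bmod 4$. Your $2b_n+2b_{n-1}$ is instead the coefficient of $x^{4n+1}$, an index shift by two. Moreover, your bracket evaluates to $-3(-b_n+b_{n-1})=+3\beta_{n-1}$, so the example as written gives the wrong sign. The correct $b$-part of $a_{4n+3}-a_{4n+2}$ is $(-3b_n-9b_{n-1})-(-12b_{n-1})=-3\beta_{n-1}$.

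\textbf{The delta terms.} The term $-x^9$ enters only the $r=1$ formula. So $\alpha_{4n}=a_{4n+1}-a_{4n}$ picks up $-\delta_{n,2}$ directly from $a_{4n+1}$, with no shift involved. The shifted $r=0$ formula used for $\alpha_{4n+3}$ has no delta term.

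\textbf{The zero-sum property.} The $a$-coefficients of each difference sum to zero because each residue class of coefficients of $P^3$ sums to $16$ ($1+12+3=3+12+1=6+10=10+6$). This holds because $P(-1)=P(\pm i)=0$; the total of $64$ alone does not imply it. Likewise each residue class of $P^2$ sums to $4$.

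Writing out all four formulas for $a_{4n+r}$ explicitly, as the paper does, and then subtracting removes these issues.
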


\begin{proof}
Cubing \eqref{eq:C} gives
\[
C(x)^3=P(x)^3C(x^4)^3-3x^3P(x)^2C(x^4)^2+3x^6P(x)C(x^4)-x^9.
\]
Extracting coefficients of $x^{4n+s}$, $s=0,1,2,3$, we have
\[
\begin{aligned}
a_{4n}&=a_n+12a_{n-1}+3a_{n-2}-6b_{n-1}-6b_{n-2}+3c_{n-2},\\
a_{4n+1}&=3a_n+12a_{n-1}+a_{n-2}-9b_{n-1}-3b_{n-2}+3c_{n-2}-\delta_{n,2},\\
a_{4n+2}&=6a_n+10a_{n-1}-12b_{n-1}+3c_{n-1},\\
a_{4n+3}&=10a_n+6a_{n-1}-3b_n-9b_{n-1}+3c_{n-1}.
\end{aligned}
\]
To illustrate the extraction, the coefficient of $x^{4n}$ in the term $P(x)^3C(x^4)^3$ is $a_n+12a_{n-1}+3a_{n-2}$, while the remaining terms contribute $-6b_{n-1}-6b_{n-2}+3c_{n-2}$.
Subtracting consecutive formulas in the above four equations and rewriting differences as $\alpha_j$ and $\beta_j$, we have
\[
\begin{aligned}
\alpha_{4n}&=2\alpha_{n-2}+2\alpha_{n-1}-3\beta_{n-2}-\delta_{n,2},\\
\alpha_{4n+1}&=3\alpha_{n-1}+\alpha_{n-2}-3\beta_{n-2}+3(c_{n-1}-c_{n-2})+\delta_{n,2},\\
\alpha_{4n+2}&=4\alpha_{n-1}-3\beta_{n-1},\\
\alpha_{4n+3}&=\alpha_n+3\alpha_{n-1}-3\beta_{n-1}.
\end{aligned}
\]
\end{proof}

\section{Bounds for the two-fold and three-fold differences}

We first prove the two-sided bound for $\beta_n$. The upper bound will be used for the lower estimate on $\alpha_n$; the lower bound is the key input for the upper estimate on $\alpha_n$.

\begin{lemma}\label{lem:beta-bound}
For every $n\ge 0$,
\[
-1\le \beta_n\le 2.
\]
\end{lemma}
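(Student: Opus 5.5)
We argue by strong induction on $n$, using the recurrences of Lemma~\ref{lem:beta-rec}. The idea is that three of the four branches simply copy a smaller value of $\beta$. Only the branch $4n+2$ involves the indicator $c$, and there the correction term $-2(c_n-c_{n-1})$ is nonzero only at the boundaries of the blocks of $\BaseB$. So the task reduces to controlling $\beta_{n-1}$ at those boundaries.

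\textbf{Base cases.} We handle small $n$ by direct computation from $\BaseB=\{0,1,2,4,5,\ldots,11,16,\ldots\}$. Here $b_0=1$, $b_1=2$, $b_2=3$, $b_3=2$, $b_4=3,\ldots$, so $\beta_0=1$, $\beta_1=1$, $\beta_2=-1$, and so on. We check $-1\le\beta_m\le 2$ for all $m$ up to some small bound, say $m<16$. This covers the exceptional terms $\delta_{n,1}$, which occur at the indices $5$ and $6$.

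\textbf{Inductive step for $n\ge 2$.} Assume the bound holds for all smaller indices.
\begin{itemize}
\item For the indices $4n$, $4n+1$ and $4n+3$, the recurrence gives $\beta_{4n}=\beta_{n-1}$, $\beta_{4n+1}=\beta_{n-1}$ and $\beta_{4n+3}=\beta_n$. The bound transfers immediately.
\item For the index $4n+2$ we have $\beta_{4n+2}=\beta_{n-1}-2(c_n-c_{n-1})$.
\item Since $c$ is a $0/1$ sequence, $c_n-c_{n-1}\in\{-1,0,1\}$.
\item If $c_n=c_{n-1}$, then $\beta_{4n+2}=\beta_{n-1}$ and we are done.
\item Otherwise $n$ is a boundary point. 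Either $n=4^j$ with $j\ge1$, where $c$ jumps from $0$ to $1$. Or $n=3\cdot4^j$, where $c$ jumps from $1$ to $0$.
\end{itemize}

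\textbf{The main obstacle: the boundary values.} We must pin down $\beta$ at the points just before a boundary.
\begin{itemize}
\item At $n=4^j$ we need $\beta_{4^j-1}\ge 1$, since then $\beta_{4n+2}=\beta_{4^j-1}-2\ge-1$.
\item At $n=3\cdot4^j$ we need $\beta_{3\cdot4^j-1}\le 0$, since then $\beta_{4n+2}=\beta_{3\cdot 4^j-1}+2\le 2$.
\end{itemize}
We compute these exactly using the recurrence branch $\beta_{4m+3}=\beta_m$.
\begin{itemize}
\item Since $4^j-1=4(4^{j-1}-1)+3$, iterating gives $\beta_{4^j-1}=\beta_{0}=1$. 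So at $n=4^j$ we get $\beta_{4n+2}=1-2=-1$.
\item Similarly $3\cdot4^j-1=4(3\cdot4^{j-1}-1)+3$, so iterating gives $\beta_{3\cdot4^j-1}=\beta_{2}=-1$. So at $n=3\cdot 4^j$ we get $\beta_{4n+2}=-1+2=1$.
\end{itemize}
Both lie in $[-1,2]$. I would verify these two identities as a separate small claim by induction on $j$, checking the starting values $\beta_0=1$ and $\beta_2=-1$ directly. This also shows the bound is sharp at $-1$.

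Finally, I would double-check in the base range that the $\delta_{n,1}$ terms in Lemma~\ref{lem:beta-rec} do not push any value outside $[-1,2]$. For example, $\beta_5=\beta_0+1=2$ attains the upper bound, and $\beta_6=\beta_0-2c_1+2c_0-1=0$. This is consistent with the constant $2$ in the statement.
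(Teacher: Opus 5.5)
Your proof is correct and follows the paper's argument essentially step for step: strong induction on the base-4 recurrences of Lemma~\ref{lem:beta-rec}, with the only nontrivial branch $4n+2$ handled at the block boundaries $n=4^j$ and $n=3\cdot 4^j$, using the exact values $\beta_{4^j-1}=1$ and $\beta_{3\cdot 4^j-1}=-1$ obtained by iterating $\beta_{4m+3}=\beta_m$. Your explicit base-case check of small indices is slightly more careful than the paper, which treats the case $4N+1$ as immediate even though $\beta_5=\beta_0+1=2$ needs the exact value $\beta_0=1$ and not just the inductive bound.
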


\begin{proof}
The initial values are
\[
\beta_0=1,\qquad \beta_1=1,\qquad \beta_2=-1,\qquad \beta_3=1.
\]
Since $\beta_{4m+3}=\beta_m$ by Lemma~\ref{lem:beta-rec}, by induction, it follows that
\begin{equation}\label{eq:beta-special}
\beta_{3\cdot 4^t-1}=\beta_2=-1\quad (t\ge 0),\qquad \beta_{4^t-1}=\beta_3=1\quad (t\ge 1).
\end{equation}
We prove $\beta_m\le 2$ by induction on $m$. Write $m=4N+s$, where $s\in\{0,1,2,3\}$. The cases $s=0,1,3$ follow immediately from Lemma~\ref{lem:beta-rec} and the induction hypothesis. If $s=2$, then
\[
\beta_{4N+2}=\beta_{N-1}-2c_N+2c_{N-1}-\delta_{N,1}.
\]
If $c_{N-1}\le c_N$, the extra term is nonpositive and the induction hypothesis gives the result. If $c_{N-1}=1$ and $c_N=0$, then by the structure of $\BaseB$ we must have $N=3\cdot 4^t$ for some $t\ge 0$. Hence \eqref{eq:beta-special} gives
\[
\beta_{4N+2}=\beta_{3\cdot 4^t-1}+2=1.
\]
Thus $\beta_m\le 2$ in all cases.

It remains to prove $\beta_m\ge -1$. Again write $m=4N+s$. The cases $s=0,1,3$ follow immediately from Lemma~\ref{lem:beta-rec} and the induction hypothesis. For $s=2$, use
\[
\beta_{4N+2}=\beta_{N-1}-2c_N+2c_{N-1}-\delta_{N,1}.
\]
If $(c_{N-1},c_N)\ne (0,1)$, then $-2c_N+2c_{N-1}\ge 0$ except for the harmless subtraction of $\delta_{N,1}$; when $N=1$ the right-hand side is $\beta_0-1=0$. Hence the induction hypothesis gives $\beta_{4N+2}\ge -1$. If $(c_{N-1},c_N)=(0,1)$, then $N=4^t$ for some $t\ge 1$. Therefore \eqref{eq:beta-special} gives
\[
\beta_{4N+2}=\beta_{4^t-1}-2=-1.
\]
This proves $\beta_m\ge -1$ for all $m\ge 0$.
\end{proof}

\begin{lemma}\label{lem:alpha-lower}
For every $n\ge 78$,
\[
\alpha_n=R_3(n+1)-R_3(n)\ge \frac n5+4.
\]
Moreover, $\alpha_n>0$ for every $n\ge 0$.
\end{lemma}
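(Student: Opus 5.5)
We argue by strong induction on $n$, writing $n=4N+s$ with $s\in\{0,1,2,3\}$ and using the recurrences of Lemma~\ref{lem:alpha-rec}. In each case $\alpha_n$ is a combination of $\alpha_{N}$, $\alpha_{N-1}$, $\alpha_{N-2}$, with total weight $4$ on the $\alpha$'s, plus correction terms. For $s=0,1$ the corrections are $-3\beta_{N-2}$, together with $3(c_{N-1}-c_{N-2})$ and $\pm\delta_{N,2}$. For $s=2,3$ the correction is $-3\beta_{N-1}$. By Lemma~\ref{lem:beta-bound} we have $\beta_j\le 2$ and $|c_{N-1}-c_{N-2}|\le 1$, so every correction is at least $-6-3-1=-10$, and typically at least $-6$ (for $N\ge 3$ the delta term vanishes). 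Hence, if each of $\alpha_N,\alpha_{N-1},\alpha_{N-2}$ satisfies $\alpha_j\ge j/5+4$, then
\[
\alpha_n\ \ge\ 4\Bigl(\frac{N-2}{5}+4\Bigr)-9\ =\ \frac{4N}{5}+7-\frac85\ \ge\ \frac{4N+3}{5}+4 ,
\]
since $4-8/5-3/5>0$. This already closes the case $s=2,3$. For $s=0,1$ we must check the weights more carefully. For instance, $\alpha_{4N}=2\alpha_{N-2}+2\alpha_{N-1}-3\beta_{N-2}$ gives
\[
\alpha_{4N}\ \ge\ \frac{4N-6}{5}+16-6\ =\ \frac{4N}{5}+10-\frac65,
\]
which comfortably exceeds $4N/5+4$. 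The other cases leave a similar slack. The constant $4$ in the target bound is exactly what absorbs the bounded losses from the $\beta$ and $c$ terms.

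The induction only applies once $N-2\ge 78$, i.e. $n\gtrsim 320$. Below that threshold the bound $\alpha_n\ge n/5+4$ for $78\le n\le 330$ (say) is a finite check. We will do it by computing $a_n$ from the recurrences of Lemma~\ref{lem:alpha-rec} (or directly from $C(x)^3$). There is a subtlety: for $n$ in the inductive range, the indices $N-2,N-1,N$ are all at least $78$ only when $n\ge 4\cdot 80$. So we either verify numerically up to about $330$, or we use a weaker explicit bound for the smaller indices. The first option is the simplest, and it is what I will do.

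For the positivity claim $\alpha_n>0$ for all $n\ge 0$, the range $n\ge 78$ follows from the bound just proved, and $0\le n\le 77$ is a direct finite computation; this is consistent with Shallit's result that $R_{\BaseB,3}$ is strictly increasing. The main obstacle is really only bookkeeping. We must confirm that in each residue class the negative correction never exceeds the slack $4\cdot 4-4-(\text{shift losses})$, including the $s=1$ case, where $3(c_{N-1}-c_{N-2})$ can be $-3$ simultaneously with $\beta_{N-2}=2$; there the slack is still $16-4-9-\tfrac{8}{5}-\tfrac15>0$. We must also carry out the finite verification reliably.
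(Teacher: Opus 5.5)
Your proposal is correct and is essentially the paper's argument: induction through Lemma~\ref{lem:alpha-rec} using $\beta_j\le 2$ and $c_{N-1}-c_{N-2}\ge -1$ (the paper phrases it as a propagation statement with $N_0=80$), plus a finite check of the bound on $78\le n\le 319$ and of positivity on $0\le n<78$. One small slip: the uniform estimate closes because $7-\frac85\ge\frac35+4$, i.e.\ $3-\frac85-\frac35>0$ (not $4-\frac85-\frac35>0$), and this single bound with loss $9$ already handles all four residues once $N\ge 3$, so the separate treatment of $s=0,1$ is not needed.
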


\begin{proof}
Put $L(x)=x/5+4$. We claim the following propagation statement: if $N_0\ge 6$ and
\[
\alpha_n\ge L(n)\qquad (N_0-2\le n<4N_0),
\]
then
\[
\alpha_n\ge L(n)\qquad (n\ge N_0-2).
\]
Indeed, assume inductively that the estimate is known for $N_0-2\le n<4N$, where $N\ge N_0$. Using Lemmas~\ref{lem:alpha-rec} and \ref{lem:beta-bound}, specifically $\beta_j\le 2$, and using $c_{N-1}-c_{N-2}\ge -1$, we get
\[
\begin{aligned}
\alpha_{4N}&\ge 2L(N-2)+2L(N-1)-6\ge L(4N),\\
\alpha_{4N+1}&\ge 3L(N-1)+L(N-2)-9\ge L(4N+1),\\
\alpha_{4N+2}&\ge 4L(N-1)-6\ge L(4N+2),\\
\alpha_{4N+3}&\ge L(N)+3L(N-1)-6\ge L(4N+3).
\end{aligned}
\]
This proves the induction step and hence the propagation statement.

It remains only to verify the initial interval. A direct exact finite verification from the definition of $\BaseB$ gives
\[
\min_{78\le n\le 319}(5\alpha_n-n-20)=3.
\]
Taking $N_0=80$ in the propagation statement yields $\alpha_n\ge n/5+4$ for all $n\ge 78$. The same finite computation gives
\[
\min_{0\le n<78}\alpha_n=1,
\]
so $\alpha_n>0$ for $0\le n<78$, while the displayed lower bound gives positivity for $n\ge 78$.
\end{proof}

\begin{lemma}\label{lem:alpha-upper}
For every $n\ge 0$,
\[
\alpha_n=R_3(n+1)-R_3(n)\le 3n+2.
\]
\end{lemma}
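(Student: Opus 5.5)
Induction on $n$ using the recurrences of Lemma~\ref{lem:alpha-rec}, with the lower bound $\beta_j\ge -1$ of Lemma~\ref{lem:beta-bound} as the main input. Put $U(x)=3x+2$. Assume $\alpha_j\le U(j)$ for all $j<4N$ (with $\alpha_{-2}=0$, $\alpha_{-1}=1\le U(-1)=-1$ failing, so negative indices must be handled with their actual values). Then bound $\alpha_{4N+s}$ for $s=0,1,2,3$. Since every $\beta$ enters with coefficient $-3$, we use $-3\beta_j\le 3$. We also use $c_{N-1}-c_{N-2}\le 1$ and $\delta$-terms $\le 1$.

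For $N\ge 2$ all indices $N-1,N-2\ge 0$, and we get:
\[
\begin{aligned}
\alpha_{4N}&\le 2U(N-2)+2U(N-1)+3=12N-1\le U(4N)=12N+2,\\
\alpha_{4N+1}&\le 3U(N-1)+U(N-2)+3+3+1=12N-1\le U(4N+1)=12N+5,\\
\alpha_{4N+2}&\le 4U(N-1)+3=12N-1\le U(4N+2)=12N+8,\\
\alpha_{4N+3}&\le U(N)+3U(N-1)+3=12N+2\le U(4N+3)=12N+11.
\end{aligned}
\]
There is ample slack in every case. The $+1$ from $\delta_{n,2}$ only occurs at $N=2$ and is absorbed. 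At $s=0$ the $\delta$ enters with a minus sign and can simply be dropped.

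The only real obstacle is the small range, where the indices $N-1,N-2$ hit $-1,-2$ and the bound $U$ fails at $-1$. I would dispose of this by verifying $\alpha_n\le 3n+2$ directly for $0\le n\le 7$ (say) from the definition of $\BaseB$. Alternatively, plug the actual values $\alpha_{-1}=1$, $\alpha_{-2}=0$, $\beta_{-1}=1$, $\beta_{-2}=0$ into the recurrences for $N=0,1$. For instance, $N=0$ gives $\alpha_0=2\cdot0+2\cdot1-0=2\le 2$ and $\alpha_3=\alpha_0+3\alpha_{-1}-3\beta_{-1}=2$, while $N=1$ gives values like $\alpha_6=4\alpha_0-3\beta_0=5\le 20$. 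The inductive step then starts at $N=2$, and all indices used are $\ge 0$, where the hypothesis applies.
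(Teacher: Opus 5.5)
Your proposal is correct and essentially identical to the paper's proof: you check $\alpha_n\le 3n+2$ directly for $0\le n\le 7$, then run strong induction for $N\ge 2$ through Lemma~\ref{lem:alpha-rec}, using $-3\beta_j\le 3$, $c_{N-1}-c_{N-2}\le 1$ and $\delta_{N,2}\le 1$. There are two harmless arithmetic slips: the right-hand sides for $\alpha_{4N}$ and $\alpha_{4N+1}$ are actually $12N-7$ and $12N$, not $12N-1$, and these still lie below $12N+2$ and $12N+5$ respectively.
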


\begin{proof}
Let $U(x)=3x+2$. First, direct computation from the definition of $\BaseB$ gives
\[
\alpha_0,\alpha_1,\ldots,\alpha_7=2,3,1,2,3,7,5,6,
\]
so $\alpha_n\le U(n)$ for $0\le n\le 7$.

We now use strong induction. Let $m\ge 8$, and write $m=4N+s$ with $s\in\{0,1,2,3\}$. Then $N\ge 2$, so the indices $N-2,N-1,N$ are nonnegative and smaller than $m$. Assume $\alpha_j\le U(j)$ for all $j<m$. By Lemma~\ref{lem:alpha-rec}, the lower bound $\beta_j\ge -1$ from Lemma~\ref{lem:beta-bound}, and the trivial inequalities $c_{N-1}-c_{N-2}\le 1$ and $\delta_{N,2}\le 1$, we obtain
\[
\begin{aligned}
\alpha_{4N}&\le 2U(N-2)+2U(N-1)+3=12N-7\le 12N+2=U(4N),\\
\alpha_{4N+1}&\le 3U(N-1)+U(N-2)+7=12N\le 12N+5=U(4N+1),\\
\alpha_{4N+2}&\le 4U(N-1)+3=12N-1\le 12N+8=U(4N+2),\\
\alpha_{4N+3}&\le U(N)+3U(N-1)+3=12N+2\le 12N+11=U(4N+3).
\end{aligned}
\]
Thus $\alpha_m\le U(m)$. The induction is complete.
\end{proof}

\section{The convolution lift and the two-sided bound}

We need a simple lower-density estimate for $\BaseB$.

\begin{lemma}\label{lem:density}
For a positive integer $M$, let
\[
\BaseB(M)=\#\{b\in \BaseB:0\le b\le M\}.
\]
Then
\[\BaseB(M)\ge \frac16 M.
\]
\end{lemma}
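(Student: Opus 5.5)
We prove the bound $\BaseB(M)\ge M/6$ for every $M\ge 1$ by locating $M$ relative to powers of $4$ and counting only the elements of $\BaseB$ in the single block $\{4^j,\dots,3\cdot 4^j-1\}$ that lies below $M$. By the self-similar description,
\[
\BaseB=\{0\}\cup\bigcup_{j\ge 0}\{4^j,\ldots,3\cdot 4^j-1\},
\]
so each block $I_j=[4^j,3\cdot 4^j-1]$ contains exactly $2\cdot 4^j$ elements of $\BaseB$. Given $M\ge 1$, choose $t\ge 0$ with $4^t\le M<4^{t+1}$, and split into two cases according to whether $M\ge 3\cdot 4^t$ or $M<3\cdot 4^t$.

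\emph{Case $3\cdot 4^t\le M<4^{t+1}$.} The whole block $I_t$ lies in $[0,M]$, so
\[
\BaseB(M)\ge 2\cdot 4^t>\frac{2}{4}M=\frac M2\ge \frac M6 .
\]

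\emph{Case $4^t\le M<3\cdot 4^t$.} Here $[4^t,M]\subseteq I_t\subseteq\BaseB$, which gives $M-4^t+1$ elements. If $t\ge 1$, the block $I_{t-1}$ also lies below $4^t$ and adds $2\cdot 4^{t-1}=4^t/2$ elements. Hence
\[
\BaseB(M)\ge M-4^t+1+\frac{4^t}{2}\ge M-\frac{4^t}{2}\ge M-\frac M2=\frac M2 ,
\]
where we used $4^t\le M$. If $t=0$, then $M\in\{1,2\}$, and $\{0,1,2\}\subseteq\BaseB$ gives $\BaseB(M)=M+1\ge M/6$ directly.

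In fact the argument yields $\BaseB(M)\ge M/2$, which is stronger than the stated constant $1/6$. There is no real obstacle; the only care needed is keeping the $t=0$ case and the element $0$ straight. (The weak constant $1/6$ presumably reflects how the lemma is used later, not the proof.)
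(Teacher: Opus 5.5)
Your proof is correct; it uses the same starting point as the paper but counts different elements. Both arguments fix $t$ with $4^t\le M<4^{t+1}$. The paper then avoids any case split. It discards everything at or above $4^t$ and counts $0$ together with all complete blocks $[4^j,3\cdot 4^j-1]$, $0\le j<t$, obtaining $1+\frac{2(4^t-1)}{3}>\frac23\cdot 4^t=\frac16\cdot 4^{t+1}>\frac16 M$.

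So the constant $\frac16$ does not come from how the lemma is used later, as you guessed; there any fixed positive constant would do. It is exactly what that proof produces, because the count below $4^t$ is compared with the crude bound $M<4^{t+1}$.

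Your version uses where $M$ sits inside $[4^t,4^{t+1})$. You take the full top block $I_t$ when $M\ge 3\cdot 4^t$, and otherwise the partial block $[4^t,M]$ plus $I_{t-1}$. This costs a two-case analysis and gives the sharper bound $\BaseB(M)\ge M/2$, compared with the true $\liminf$ of $2/3$. Both arguments are equally elementary: the paper's is slightly shorter, and yours is tighter.
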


\begin{proof}
Choose $t\ge 0$ such that $4^t\le M<4^{t+1}$. If $t=0$, the result is immediate. Assume $t\ge 1$. Since $\BaseB$ contains the intervals $[4^j,3\cdot 4^j-1]$ for $0\le j<t$, we have
\[\BaseB(M)\ge 1+\sum_{j=0}^{t-1}2\cdot 4^j=1+\frac{2(4^t-1)}3>\frac23 \cdot4^t=\frac16 \cdot4^{t+1}>\frac16 M.
\]
\end{proof}

\begin{lemma}\label{lem:convolution}
For positive integers $p,q$ and $n\ge 0$,
\[
R_{p+q}(n)=\sum_{u=0}^n R_p(n-u)R_q(u).
\]
Consequently,
\begin{equation}\label{eq:diff-convolution}
\Delta_{p+q}(n)=\sum_{u=0}^n \Delta_p(n-u)R_q(u)+R_p(0)R_q(n+1).
\end{equation}
\end{lemma}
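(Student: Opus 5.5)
The first identity comes straight from the generating functions. Since $\sum_{n\ge0}R_h(n)x^n=C(x)^h$ for every $h\ge1$, we have $C(x)^{p+q}=C(x)^p\,C(x)^q$. Comparing the coefficients of $x^n$ on both sides gives
\[
R_{p+q}(n)=\sum_{u=0}^n R_p(n-u)R_q(u).
\]
Combinatorially this is the same statement: split an ordered $(p+q)$-tuple into its first $p$ and last $q$ coordinates, and let $u$ be the sum of the last $q$ of them.

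For \eqref{eq:diff-convolution}, I would write $R_{p+q}(n+1)$ by the identity just proved, with $n+1$ in place of $n$, and separate the term $u=n+1$:
\[
R_{p+q}(n+1)=\sum_{u=0}^{n} R_p(n+1-u)R_q(u)+R_p(0)R_q(n+1).
\]
Subtracting $R_{p+q}(n)=\sum_{u=0}^n R_p(n-u)R_q(u)$ term by term, the summand indexed by $u$ becomes
\[
\bigl(R_p(n+1-u)-R_p(n-u)\bigr)R_q(u)=\Delta_p(n-u)R_q(u).
\]
Here $n-u\ge 0$, so $\Delta_p(n-u)$ is defined as in the notation section. The boundary term $R_p(0)R_q(n+1)$ is left over, and this gives \eqref{eq:diff-convolution}.

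Equivalently, multiplying $C(x)^{p+q}$ by $(1-x)$ and writing it as $\bigl((1-x)C(x)^p\bigr)C(x)^q$ produces the same identity; the leftover boundary term then appears from the convention $R_p(-1)=0$. No step here is a real obstacle. The only point needing care is the index bookkeeping at $u=n+1$, where $R_p(n-u)=R_p(-1)=0$ by convention. Note also that $R_p(0)=1$, since $0\in\BaseB$ and the only representation of $0$ is $0+\cdots+0$.
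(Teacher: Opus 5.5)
Your proposal is correct and follows essentially the same route as the paper: the convolution identity via $C(x)^{p+q}=C(x)^pC(x)^q$ is equivalent to the paper's grouping of tuples by the sum $u$ of the last $q$ entries, which you also mention. The difference formula then comes from evaluating at $n+1$ and at $n$, with the $u=n+1$ term $R_p(0)R_q(n+1)$ handled by the convention $R_p(-1)=0$, exactly as in the paper.
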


\begin{proof}
For the first identity, group an ordered $(p+q)$-tuple according to the sum $u$ of its last $q$ entries. The first $p$ entries then have sum $n-u$, giving $R_p(n-u)R_q(u)$ choices. Summing over $u$ gives the convolution formula. Applying this formula at $n+1$ and at $n$, and using the convention $R_p(-1)=0$, gives \eqref{eq:diff-convolution}.
\end{proof}

By Lemma \ref{lem:convolution}, if $\Delta _p(n)>0$ for every $n\ge 0$ for some positive integer $p$, then for every $p'>p$ we have $\Delta _{p'}(n)>0$ for every $n\ge 0$. We also use the following elementary upper estimate.

\begin{lemma}\label{lem:upper-elementary}
For every integer $h\ge 1$ and $N\ge 0$,
\[
\sum_{u=0}^N R_h(u)\le \binom{N+h}{h}
\]
and
\[
R_h(N+1)\le \binom{N+h}{h-1}.
\]
\end{lemma}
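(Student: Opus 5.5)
The plan is to compare $\BaseB$ with the full set $\mathbb N$: since $\BaseB\subseteq\mathbb N$, every ordered representation of $u$ by elements of $\BaseB$ is also one by elements of $\mathbb N$. Hence
\[
R_h(u)=R_{\BaseB,h}(u)\le R_{\mathbb N,h}(u)=\binom{u+h-1}{h-1}\qquad(u\ge 0).
\]
Both inequalities then follow from standard binomial identities.

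For the first inequality, sum this bound over $0\le u\le N$ and use the hockey-stick identity
\[
\sum_{u=0}^N\binom{u+h-1}{h-1}=\binom{N+h}{h}.
\]
This identity can be justified directly. The right-hand side counts $h$-tuples of nonnegative integers with sum at most $N$. Grouping these tuples by their exact sum $u$ gives the left-hand side.

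For the second inequality, apply the same comparison at $N+1$:
\[
R_h(N+1)\le\binom{N+1+h-1}{h-1}=\binom{N+h}{h-1}.
\]
This is exactly the claimed bound. Neither step presents a real obstacle. The only point to check is that $h=1$ is also covered: there $\binom{u}{0}=1$, $R_1(u)=c_u\le 1$, and $\sum_{u\le N}c_u\le N+1=\binom{N+1}{1}$, consistent with the general argument.
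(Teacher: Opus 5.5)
Your proposal is correct and is essentially the paper's argument: both compare $\BaseB$ with $\mathbb N$ and count $h$-tuples of nonnegative integers by stars and bars. The only difference is that you bound each $R_h(u)$ separately and then sum with the hockey-stick identity, whereas the paper bounds the cumulative count directly by the number of tuples with sum at most $N$. The two are the same count organized differently.
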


\begin{proof}
Since $\BaseB\subseteq \mathbb N$, the number $\sum_{u=0}^N R_h(u)$ is at most the number of ordered $h$-tuples of nonnegative integers with sum at most $N$, which is $\binom{N+h}{h}$. Similarly, $R_h(N+1)$ is at most the number of ordered $h$-tuples of nonnegative integers with sum $N+1$, which is $\binom{N+h}{h-1}$.
\end{proof}

\begin{proof}[Proof of Theorem~\ref{thm:main}]
If $h=3$, then \(R_{\BaseB,3}(n+1)-R_{\BaseB,3}(n)=\alpha_n\). Lemmas~\ref{lem:alpha-lower} and \ref{lem:alpha-upper} give the upper bound and the lower bound after adjusting the constants on the finite interval $1\le n<78$. For instance, one may take
\[
c_3=\min\left\{\frac15,\min_{1\le n<78}\frac{\alpha_n}{n}\right\}>0,
\qquad C_3=5.
\]
The strict positivity for $h=3$ is exactly the second assertion of Lemma~\ref{lem:alpha-lower}.

Now fix $h\ge 4$ and put $k=h-3$.
First we prove the lower bound and positivity. By Lemma~\ref{lem:convolution}, applied with $p=3$ and $q=k$, we have
\begin{equation}\label{eq:lower-conv}
\Delta_h(n)\ge \sum_{u=0}^n \alpha_{n-u}R_k(u).
\end{equation}
Since $R_k(0)=1$ and $\alpha_n>0$ for all $n\ge 0$, this implies
\begin{equation}\label{eq:positive}
\Delta_h(n)>0\qquad (n\ge 0).
\end{equation}
Now assume
\[
n\ge N_h:=\max\{156,4k\}.
\]
If $0\le u\le n/2$, then $n-u\ge n/2\ge 78$. Hence Lemma~\ref{lem:alpha-lower} gives
\[
\alpha_{n-u}\ge \frac{n-u}{5}+4\ge \frac n{10}.
\]
Therefore, by \eqref{eq:lower-conv},
\begin{equation}\label{eq:first-lower}
\Delta_h(n)\ge \frac n{10}\sum_{0\le u\le n/2}R_k(u).
\end{equation}
Set
\[
M=\left\lfloor\frac n{2k}\right\rfloor.
\]
Since $n\ge 4k$, we have $M\ge n/(4k)$. Every ordered $k$-tuple in $(\BaseB\cap[0,M])^k$ has sum at most $kM\le n/2$. Therefore, by Lemma~\ref{lem:density},
\[
\sum_{0\le u\le n/2}R_k(u)\ge  \BaseB (M)^k\ge \left(\frac M6\right)^k\ge \left(\frac n{24k}\right)^k.
\]
Combining this with \eqref{eq:first-lower}, we obtain, for all $n\ge N_h$,
\begin{equation}\label{eq:asymp-lower}
\Delta_h(n)\ge \frac1{10}\left(\frac1{24k}\right)^k n^{k+1}=\frac1{10}\left(\frac1{24(h-3)}\right)^{h-3}n^{h-2}.
\end{equation}
To cover the finite interval $1\le n<N_h$, define
\[
c_h=\min\left\{\frac1{10}\left(\frac1{24(h-3)}\right)^{h-3},\ \min_{1\le n<N_h}\frac{\Delta_h(n)}{n^{h-2}}\right\}.
\]
By \eqref{eq:positive}, the second minimum is taken over finitely many positive numbers, hence $c_h>0$. The lower bound follows from the definition of $c_h$ and \eqref{eq:asymp-lower}.

It remains to prove the upper bound. Again use Lemma~\ref{lem:convolution} with $p=3$ and $q=k$. Since $R_3(0)=1$, Lemma~\ref{lem:alpha-upper} gives, for every $n\ge 0$,
\[
\Delta_h(n)=\sum_{u=0}^n \alpha_{n-u}R_k(u)+R_k(n+1)
\le (3n+2)\sum_{u=0}^n R_k(u)+R_k(n+1).
\]
By Lemma~\ref{lem:upper-elementary},
\begin{equation}\label{eq:upper-binomial}
\Delta_h(n)\le (3n+2)\binom{n+k}{k}+\binom{n+k}{k-1}.
\end{equation}
In particular, if $n\ge 1$, then $n+k\le (k+1)n$ and $3n+2\le 5n$, so
\[
\Delta_h(n)\le \left(5\frac{(k+1)^k}{k!}+\frac{(k+1)^{k-1}}{(k-1)!}\right)n^{k+1}.
\]
Thus the desired upper bound holds with
\[
C_h=5\frac{(h-2)^{h-3}}{(h-3)!}+\frac{(h-2)^{h-4}}{(h-4)!}.
\]
The proof is complete.
\end{proof}

\begin{proof}[Proof of Theorem~\ref{thm:main2}]
Let
\[
E=\{2^j:\ j\ge 10\},\qquad \NewB=\mathbb N\setminus E.
\]
Then $\NewB$ is co-infinite. Moreover, if
\[
E(N)=\#\{a\in E:\ a\le N\},
\]
then $E(N)=O(\log N)$, and hence
\[
\NewB(N)=N+1-E(N).
\]
It follows that
\[
\lim_{N\to\infty}\frac{\NewB(N)}{N}=1.
\]

We next prove strict monotonicity. Let
\[
D(x)=\sum_{a\in E}x^a.
\]
The generating function of $\NewB$ is
\[
G_{\NewB}(x)=\sum_{b\in \NewB}x^b=\frac1{1-x}-D(x).
\]
We first prove the result for $h=3$. Put
\[
S_3(n)=R_{\NewB,3}(n).
\]
Since
\[
\sum_{n\ge0}S_3(n)x^n=G_{\NewB}(x)^3,
\]
we have
\begin{equation}\label{eq:newB-diff}
S_3(n+1)-S_3(n)=[x^{n+1}](1-x)G_{\NewB}(x)^3,
\end{equation}
where $[x^{n+1}](1-x)G_{\NewB}(x)^3$ denotes the coefficient of $x^{n+1}$ in $(1-x)G_{\NewB}(x)^3$.

Let $N=n+1$. Using $G_{\NewB}(x)=1/(1-x)-D(x)$, we get
\[
(1-x)G_{\NewB}(x)^3
=\frac1{(1-x)^2}-\frac{3D(x)}{1-x}+3D(x)^2-(1-x)D(x)^3.
\]
It follows from \eqref{eq:newB-diff} that
\[
S_3(n+1)-S_3(n)=N+1-3E(N)+3r_2(N)-r_3(N)+r_3(N-1),
\]
where $r_i(N)$ denotes the ordered number of representations of $N$ as a sum of $i$ elements of $E$.
Since $r_2(N)\ge0$ and $r_3(N-1)\ge0$, we obtain
\[
S_3(n+1)-S_3(n)\ge N+1-3E(N)-r_3(N).
\]
Every representation
\[
N=a_1+a_2+a_3,\qquad a_1,a_2,a_3\in E,
\]
is determined by the ordered pair $(a_1,a_2)$, since then $a_3=N-a_1-a_2$. Thus
\[
r_3(N)\le E(N)^2.
\]
Consequently,
\[
S_3(n+1)-S_3(n)\ge N+1-3E(N)-E(N)^2.
\]
If $N<2^{10}$, then $E(N)=0$, and so $S_3(n+1)-S_3(n)\ge N+1>0$.
Now assume $N\ge 2^{10}$ and put $m=E(N)$. Then $m\ge 1$, and from the definition of $E$ we have $N\ge 2^{m+9}$. Hence
\[
N+1-3E(N)-E(N)^2=N+1-3m-m^2\ge 2^{m+9}+1-3m-m^2.
\]
For every $m\ge1$, one has $m^2+3m<2^{m+9}$. Therefore
\[
S_3(n+1)-S_3(n)>0
\]
for every $n\ge0$.

Finally let $h\ge4$. By the convolution identity,
\[
R_{\NewB,h}(n)=\sum_{u=0}^n R_{\NewB,3}(n-u)R_{\NewB,h-3}(u).
\]
Thus
\[
\begin{aligned}
R_{\NewB,h}(n+1)-R_{\NewB,h}(n)
&=\sum_{u=0}^{n}\bigl(R_{\NewB,3}(n+1-u)-R_{\NewB,3}(n-u)\bigr)R_{\NewB,h-3}(u)\\
&\quad +R_{\NewB,3}(0)R_{\NewB,h-3}(n+1).
\end{aligned}
\]
Since $0\in \NewB$, the term corresponding to $u=0$ in the above sum is strictly positive, and all other terms are nonnegative. Hence
\[
R_{\NewB,h}(n+1)-R_{\NewB,h}(n)>0.
\]
This proves the theorem for every $h\ge3$.
\end{proof}

\end{document}